\newtheorem{theorem}{Theorem}[section]
\newtheorem{lemma}[theorem]{Lemma}
\def\irr#1{{\rm  Irr}(#1)}
\def\ibr#1{{\rm IBr}(#1)}
\def\phi{\varphi}
\newcommand{\Bpi}[1]{{\rm B}_\pi (#1)}
\newcommand{\Ipi}[1]{{\rm I}_\pi (#1)}
\title{Upper and lower bounds on Chillag table sums}
\author{Xiaoyou Chen \\ College of Science, Henan University of Technology, \\ Zhengzhou 450001, China \\ e-mail: cxymathematics@hotmail.com \\ \\ Mark L. Lewis and Hung P. Tong-Viet \\ Department of Mathematical Sciences, \\ Kent State University, Kent, OH 44242 \\ e-mail: lewis@math.kent.edu and htongvie@math.kent.edu}
\date{February 6, 2017}
\begin{document}

\maketitle

\begin{abstract}
Chillag has showed that there is a single generalization showing that the sums of ordinary character tables, Brauer character, and projective indecomposable characters are positive integers.  We show that Chillag's construction also applies to Isaacs' $\pi$-partial characters.  We show that if an extra condition is assumed, then we can obtain upper and lower bounds on the Chillag's table sums.  We will demonstrate that this condition holds for Brauer characters, $\pi$-partial characters, and projective indecomposable characters, and so we obtain upper and lower bounds for the table sums in those cases.  We also obtain results regarding the sums of rows and columns in these tables.

MSC[2010]: 20C15, 20C20

Keywords: Character table, Brauer characters, Projective indecomposable character
\end{abstract}

\section{Introduction}

All groups will be finite.  Let $G$ be a group and define $s (G)$ by $s (G) = \sum_{i=1}^n \sum_{j=1}^n \chi_i (g_j)$ where $\chi_1, \dots, \chi_n$ are the irreducible characters of $G$ and $g_1, \dots, g_n$ are representatives of the conjugacy classes of $G$.  Note that $s (G)$ is the sum of the entries in the character table of $G$.  In \cite{Solomon}, L. Solomon showed that $s(G)$ is a positive integer.  Furthermore, Solomon showed that $|A| \le s (G) \le |G|$ where $A$ is an abelian subgroup of $G$ of maximal order and he determined necessary and sufficient conditions for $s(G)$ to equal either $|G|$ or $|A|$.

Chillag showed that linear algebra could be used to show that $s (G)$ is a positive integer (see \cite{arcata}), and he then used these techniques to obtain other information regarding the ordinary characters of $G$.  Furthermore, in \cite{chilbrau}, Chillag used a similar linear algebra argument to obtain information about the Brauer characters of $G$, and Proposition 2.5 (i) of \cite{chilbrau} can be used to show that the sum of the elements of the Brauer character table of $G$ for a prime $p$ is a positive integer.  Finally, in \cite{chilgen}, Chillag gave definitions that generalized both ordinary characters and Brauer characters, and  formalized the linear algebra for this generalized situation.  He also showed that his arguments could be applied to conjugacy classes and projective indecomposable characters.

We will show that Chillag's work can be applied to Isaacs' $\pi$-partial characters of $\pi$-separable groups to show that the sum of the entries in the ``$\pi$-partial character table'' of $G$ is a positive integer.  Also, we will show that by adding an additional assumption to Chillag's definition, we can obtain an upper and a lower bound on the sums of the various ``tables.''  We will apply this bound to obtain upper and lower bounds for the table sums for Brauer characters, projective indecomposable characters, and $\pi$-partial characters.

The key point from Chillag's work is that the ``row sums,'' i.e. the sum of the elements in a row of such a table, are nonnegative integers.  It makes sense to ask what can be said about the column sums.  In \cite{chendu}, they showed that the column sums for the Brauer characters of $p$-solvable groups are integers.  We will show that if we add a different additional hypothesis, we can show that the column sums in Chillag's tables are integers.  We will apply this to see that column sums for ordinary characters and $\pi$-partial characters are integers.  
We will see that Brauer characters for groups that are not $p$-solvable do not necessarily satisfy our additional hypothesis, and we will give examples where the column sums are not integers.

\section{Extending Chillag's work}

In this section, we summarize Chillag's definitions and results from \cite{chilgen}.  We begin by letter $F$ be a subfield of the real numbers.  We say $A$ is a {\it SFCA} over $F$ if $A$ is a semisimple, finite-dimensional commutative $F$-algebra.  Let $B = \{ b_1, \dots, b_n \}$ be a basis for $A$ over $F$, and we let
$$
b_i b_j = \sum_{k=1}^n \alpha_{ijk} b_k.
$$
We say that the $\alpha_{ijk} \in F$ are the {\it structure constants} for $B$.  If all the structure constants are nonnegative real numbers, then we say that $B$ is a {\it nonnegative basis} for $A$.  For $a \in A$, we write
$$
ab_i = \sum_{j=1}^n m_{ij} (a) b_j,
$$
where $m_{ij} (a) \in F$ are uniquely determined.  We set $M (a,B)$ to be the matrix whose $(i,j)$-entry is $m_{ij} (a)$.  Notice that $m_{jk} (b_i) = a_{ijk}$, and so it follows that $B$ is a nonnegative basis for $A$ if and only if the entries $M (b_i,B)$ are nonnegative real numbers for all $i$.

For every element $a \in A$, we can write $a = \sum_{i=1}^n \alpha_i (a) b_i$.  We say that $a$ is {\it nonnegative} if all the $\alpha_i (a)$ are nonnegative real numbers.  When $B$ is a nonnegative basis and $a$ is a nonnegative element of $A$, it follows that the entries of $M(a,B)$ are all nonnegative, and so, the Perron-Frobenius Theorem can be applied to say that $M (a,B)$ has a positive real eigenvalue that equals the spectral radius of $M (a,B)$, and following Chillag, we call this the leading eigenvalue of $M (a,B)$ and we denote it by $\rho (a)$.  We then say that $B$ is a {\it right-positive basis} for $A$ if $B$ is a nonnegative basis of $A$ and for every nonnegative element $a \in A$, the matrices $M (a,B)$ have eigenvectors for $\rho (a)$ that contain positive real numbers.

For each element $a \in A$, we write $a(1) = \rho (a), \dots, a(n)$ for eigenvalues of $M(a,B)$.  Note that we may need to go to an extension field of $F$ to obtain these eigenvalues.  Chillag shows in Definition 1.3 and Theorem 1.4 of \cite{chilgen} when $B$ is a right-positive basis for $A$ that the ordering can be done uniformly for all $a \in A$, and in this uniform ordering, the eigenvalue $a (1) = \rho (a)$ has the eigenvector $(b_1 (1), \dots, b_n (1))$ for $M (a,B)$.

Following Chillag, we define $X(A,B)$ to be the $n \times n$ matrix whose $i,j$th-entry is $b_i (j)$, and as Chillag does, we say that $X(A,B)$ is the {\it table} for $(A,B)$.  Note that the entries in $X (A,B)$ lie in some field extension of $F$.  We set $s (A,B) = \sum_{i=1}^n \sum_{j=1}^n b_i(j)$ to be the {\it table sum} of $X (A,B)$ and note that $s (A,B)$ is the sum of the entries in $X (A,B)$.  For each $i$, we set $s_i (A,B) = s (b_i,B) = \sum_{j=1}^n  b_i (j)$ to be the $i$th row sum, i.e. the sum of the entries in the $i$th row.  Observe the $s (A,B) = \sum_{i=1}^n s_i (A,B)$.

In Theorem 1.4(2) of \cite{chilgen}, Chillag proved when $B$ is a right-positive basis that
$$
(X(A,B))^{-1} M(A,B) X(A,B) = {\rm diag} (a(1), \dots, a (n)).
$$
This implies that $s_i (A,B) = {\rm Tr} (M(b_i,B))$.  Hence, $s_i (A,B)$ will be a nonnegative real number for every $i$, and thus, $s (A,B)$ is a nonnegative real number.  (This is the content of Proposition 2.2 (b) in \cite{chilgen}.)  Notice that if the entries of the $M (b_i,B)$'s are integers (in fact, we only need the entries on the diagonals, i.e. the values $m_{jj} (b_i)$  to be integers), then the $s_i (A,B)$ will be integers, and $s (A,B)$ is an integer.  Note that if there exists an $i$ so that $b_i (j) = 1$ for all $j$, then $s_i (A,B) = n$ and $n \le s (A,B)$, so in this case, we know that $s (A,B)$ is positive.  We now show that if we add an additional hypothesis, we can obtain an upper bound and a different lower bound.

\begin{theorem} \label{bounds}
Let $A$ be an SFCA with right-positive basis $B$, and let $X = X (A,B)$ and $s = s (A,B)$ be as above.  Suppose that there exist real valued vectors $u$ and $v$ so that the entries in $u$ are positive, the entries in $v$ are nonnegative, and $uA = v$.  If $u_{min}$ is the minimum value of the entries in $u$, $u_{max}$ is the maximum value of the entries in $u$, and $v_{sum}$ is the sum of the entries in $v$, then
$$
\frac {v_{sum}}{u_{max}} \le s \le \frac {v_{sum}}{u_{min}}.
$$
\end{theorem}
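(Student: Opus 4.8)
The plan is to reduce the two-sided bound to a single identity expressing $v_{sum}$ as a nonnegatively weighted combination of the row sums, after which the inequalities follow from an elementary sandwiching estimate. The one substantive step is producing that identity; everything else is bookkeeping.

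First I would translate the hypothesis $uA = v$ into coordinates. Writing $u = (u_1, \dots, u_n)$ and recalling that the $(i,j)$-entry of the table is $X_{ij} = b_i(j)$, the relation says that $v$ is the product of the row vector $u$ with the table $X$, so that $v_j = \sum_{i=1}^n u_i\, b_i(j)$ for each $j$; that is, $v$ records the $u$-weighted column sums of $X$. Summing the entries of $v$ and interchanging the order of summation then gives
$$
v_{sum} = \sum_{j=1}^n v_j = \sum_{j=1}^n \sum_{i=1}^n u_i\, b_i(j) = \sum_{i=1}^n u_i \sum_{j=1}^n b_i(j) = \sum_{i=1}^n u_i\, s_i(A,B).
$$
This is the key identity: $v_{sum}$ is a weighted sum of the row sums $s_i(A,B)$ with the positive weights $u_i$.

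Next I would invoke the two facts recorded just before the statement: each row sum satisfies $s_i(A,B) = {\rm Tr}(M(b_i,B)) \ge 0$, since the trace of a matrix with nonnegative entries is nonnegative, and $\sum_{i=1}^n s_i(A,B) = s$. Because every entry of $u$ lies between $u_{min}$ and $u_{max}$ with $0 < u_{min}$, I would multiply the chain $u_{min} \le u_i \le u_{max}$ by the nonnegative numbers $s_i(A,B)$ and sum over $i$, obtaining $u_{min}\, s \le v_{sum} \le u_{max}\, s$. Dividing through by the positive quantities $u_{max}$ and $u_{min}$ yields $\frac{v_{sum}}{u_{max}} \le s \le \frac{v_{sum}}{u_{min}}$, which is the claim.

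Since the bounds are a one-line averaging estimate once the identity is available, the step that needs care is the coordinate bookkeeping in $uA = v$: one must confirm that the index of $u$ runs over the rows (the basis elements $b_i$), so that the inner sum $\sum_j b_i(j)$ is exactly the row sum $s_i(A,B)$ rather than a column sum. The positivity assumption on $u$ is used only to make the division legitimate, and the nonnegativity of $v$ is in fact automatic from $s_i(A,B)\ge 0$ and $u_i>0$. I do not anticipate any genuine obstacle beyond this, as the nonnegativity of the $s_i(A,B)$ and the fact that they sum to $s$ are already established in the discussion preceding the theorem.
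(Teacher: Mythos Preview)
Your proof is correct and follows essentially the same approach as the paper: both derive the identity $v_{sum} = \sum_i u_i\, s_i(A,B)$ from the hypothesis, then sandwich using $u_{min} \le u_i \le u_{max}$ together with the nonnegativity of the row sums established just before the theorem. Your version is slightly more explicit about why $s_i(A,B) \ge 0$ is needed for the inequality step, but the argument is the same.
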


\begin{proof}
Let $u = \langle u_1, \dots, u_n \rangle$ and $v = \langle v_1, \dots, v_n \rangle$.  Then $v_j = \sum_{i=1}^n u_i b_i (j)$, and so,
$$
v_{sum} = \sum_{j=1}^n v_j = \sum_{j=1}^n \sum_{i=1}^n u_i b_i (j) = \sum_{i=1}^n u_i \sum_{j=1}^n b_i (j) = \sum_{i=1}^n u_i s_i,
$$
where $s_i = s_i (A,B)$.  It follows that
$$
v_{sum} = \sum_{i=1}^n u_i s_i \le u_{max} \sum_{i=1}^n s_i = u_{max} s,
$$
and thus, $v_{sum}/u_{max} \le s$.  Similarly,
$$
v_{sum} = \sum_{i=1}^n u_i s_i \ge u_{min} \sum_{i=1}^n s_i = u_{min} s,
$$
and thus, $v_{sum}/u_{min} \ge s$.
\end{proof}

Recall when $A$ is SFCA over $F$ and $B$ is a right-positive basis for $A$ that the entries for $X(A,B)$ lies in some extension field of $F$.  We let $F[X]$ be the extension of $F$ by the entries in $X (A,B)$.  We now look at the column sums.

\begin{theorem} \label{columns}
Let $A$ be an SFCA over the field $F$, let $B$ be a right-positive basis for $A$, let $E = F[X]$, and let $c_i = \sum_{j=1}^n = b_j (i)$.  Suppose that $E$ is Galois over $F$ and write $G$ for the Galois group of $E$ over $F$.  Suppose that there is an action of $G$ on $B$ so that the eigenvalues of $M(b^\sigma,B)$ are $(b(1)^\sigma, \dots, b(n)^\sigma)$ for all $\sigma \in G$.  (I.e., $(b^\sigma) (j) = (b (j))^\sigma$ for all $j$ and $\sigma$.)  Then $c_i \in F$ for all $i$.
\end{theorem}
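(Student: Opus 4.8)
The plan is to prove this by Galois descent. Since $E$ is Galois over $F$, an element of $E$ lies in $F$ if and only if it is fixed by every element of the Galois group $G = {\rm Gal}(E/F)$. Each column sum $c_i = \sum_{j=1}^n b_j(i)$ is a sum of entries of $X(A,B)$ and therefore lies in $E$, so it suffices to show that $c_i^\sigma = c_i$ for every $\sigma \in G$.

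First I would fix $\sigma \in G$ and apply it to $c_i$. Because $\sigma$ is a field automorphism, and hence additive, it commutes with the finite sum, so that $c_i^\sigma = \sum_{j=1}^n (b_j(i))^\sigma$. The hypothesis $(b^\sigma)(j) = (b(j))^\sigma$ then lets me rewrite each summand, giving $c_i^\sigma = \sum_{j=1}^n (b_j^\sigma)(i)$. The next step is to invoke the fact that $G$ acts on the set $B$, so that $b \mapsto b^\sigma$ is a permutation of $\{b_1, \dots, b_n\}$. Consequently, as $j$ ranges over $1, \dots, n$, the elements $b_j^\sigma$ range over all of $B$ exactly once, and $\sum_{j=1}^n (b_j^\sigma)(i)$ is merely a reindexing of $\sum_{k=1}^n b_k(i) = c_i$. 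Hence $c_i^\sigma = c_i$, and since $\sigma \in G$ was arbitrary, $c_i$ is fixed by the entire Galois group, whence $c_i \in F$.

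I expect no serious technical obstacle; the content is a clean invariance argument, and its essence is that $\sigma$ permutes the rows of $X$ (the basis elements) while simultaneously realizing the Galois action on the entries (the eigenvalues), so that the column sum is manifestly $G$-invariant. The point requiring the most care is the bookkeeping that links the two faces of the hypothesis: one must be sure that the stated identity $(b^\sigma)(j) = (b(j))^\sigma$ is used in the uniform ordering of eigenvalues established in Theorem 1.4 of \cite{chilgen}, so that summing $(b_j^\sigma)(i)$ over $j$ really does recover the $i$th column unchanged. A minor verification is that an ``action of $G$ on $B$'' is genuinely by permutations of the basis, which is precisely what legitimizes the reindexing step; once that is granted, the conclusion is immediate.
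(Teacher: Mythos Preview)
Your proposal is correct and follows essentially the same argument as the paper: both observe that the Galois action permutes the entries of the $i$th column (via the permutation action on $B$ and the identity $(b^\sigma)(j) = (b(j))^\sigma$), so each $c_i$ is $G$-invariant and hence lies in $F$. Your write-up is simply a more explicit unpacking of the paper's two-line proof.
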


\begin{proof}
Observe first that $c_i$ is the sum of the entries of the $i$th column.  Now, the action of an element $\sigma$ of $G$ on $B$ will permute the entries of the $i$th column, and so $\sigma$ just permutes the elements being summed to obtain $c_i$.  In particular, $\sigma$ fixes $c_i$ for all $\sigma \in G$.  This implies that $c_i$ is in the fixed space of $G$ which is $F$.
\end{proof}

\section{Applications}

\subsection{Ordinary Characters}

In Example 1.6 of \cite{chilgen}, Chillag takes $A = \mathbb{Q}[\irr G]$ where $\mathbb{Q}$ is the rational numbers.  He proves that $\irr G$ is a right-positive basis for $\mathbb{Q}[\irr G]$ and that $X(\mathbb{Q}[\irr G],\irr G)$ is the character table for $G$.  Label the characters in $\irr G$ by $\chi_1 = 1_G, \chi_2, \dots, \chi_n$.  Since the entries in $M (\chi,\irr G)$ are integer entries for every character $\chi$, we deduce that $s (\mathbb{Q}[\irr G],\irr G)$ is an integer larger than $n$.

To obtain Solomon's bounds, we take $v_1 = |G|$ and $v_i = 0$ for $i \ne 1$.  We may view $v$ as the regular character of $G$, so we have $u_i = \chi_i (1)$.  Note that $1$ is the minimum of $\{ \chi_1 (1), \dots, \chi_n (1)\}$.  Let $H$ be an abelian subgroup of $G$ of maximal order.  Then the maximum of $\{ \chi_1 (1), \dots, \chi_n (1) \}$ is at most $|G:H|$ (see Problem 2.9 (b) of \cite{text}).  Applying Theorem \ref{bounds}, we have that $|H| = |G|/|G:H| \le s(\mathbb{Q}[\irr G],\irr G) \le |G|$, giving us Solomon's bound.  Notice that for many groups $n$ (the number of irreducible characters) will be larger than the maximal order of abelian subgroup, so we appear to have slightly improved Solomon's lower bound.

We know that the values of the irreducible characters of $G$ are sums of roots of unity, so they all lie in some cyclotomic extension.  I.e., $\mathbb{Q}[\irr G]$ is a subfield of a cyclotomic extension, and thus, $\mathbb{Q}[\irr G]$ is a Galois extension of $\mathbb{Q}$.  It is well-known that the Galois group of $\mathbb{Q}[\irr G]$ acts on $\irr G$ in the appropriate manner (see Problem 2.2 of \cite{text}).  It follows that the sum of each of the columns of the character table for $G$ lie in $\mathbb{Q}$.  But since character values are algebraic integers, these column sums must be integers.

\subsection{Brauer Characters}

In Example 1.8 of \cite{chilgen}, Chillag takes $A = \mathbb{Q}[\ibr G]$.  He proves that $\ibr G$ is a right-positive basis for $\mathbb{Q}[\ibr G]$ and that $X(\mathbb{Q}[\ibr G],\ibr G)$ is the Brauer character table for $G$.  We write $G^o$ for the set of $p$-regular elements of $G$ and if $\chi$ is an ordinary character of $G$, we use $\chi^o$ to denote the restriction of $\chi$ to $G^o$.  We label the Brauer characters in $\ibr G$ by $\phi_1 = 1_{G^o}, \phi_2, \dots, \phi_n$.  By Theorem 2.23 of \cite{navbook}, we know that the entries in $M (\phi,\ibr G)$ are nonnegative integer entries for every Brauer character $\phi$, we deduce that $s (\mathbb{Q}[\ibr G],\ibr G)$ is an integer larger than $n$.

For each $i$, we define $\Phi_i$ to be the projective indecomposable character for $\phi_i$ (see page 25 of \cite{navbook} for the definition of projective indecomposable character).  It is possible to show that $\rho^o = \sum_{i=1}^n \Phi_i (1) \phi_i$ where $\rho$ is the regular character for $G$ (see Problem 2.9 of \cite{navbook}).  Thus, if we take $v$ to be the vector whose first entry is $|G|$ and whose remaining entries are $0$ and $u$ to be the vector whose $i$th entry is $\Phi_i (1)$, then $u$ and $v$ meet the hypotheses of Theorem \ref{bounds}.  Observe that $v_{sum} = |G|$.  By Corollary 2.14 of \cite{navbook}, we have that $\Phi_i (1) \ge |G|_p$, so applying Theorem \ref{bounds}, we have $s (\mathbb{Q}[\ibr G], \ibr G) \le |G|/|G|_p = |G|_{p'}$.

To obtain the lower bound, we need the following lemma.

\begin{lemma}\label{lem:upper bound}
Let $H$ be a $p'$-subgroup of $G$, let $\varphi_i \in \ibr G$ and let $\mu \in \ibr H = \irr H$ be a constituent of $(\varphi_i)_H$. Then $\Phi_i$ is a constituent of $\mu^G$ and so $\Phi_i (1) \leq |G:H|\mu (1)$.  In particular, if $H$ is abelian, then $\Phi_i (1) \leq |G:H|$.
\end{lemma}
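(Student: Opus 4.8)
The plan is to prove this with module-theoretic methods, realizing $\Phi_i$ as the character of the projective cover of the simple module affording $\varphi_i$ and exploiting that $H$, being a $p'$-group, has semisimple group algebra. Fix a $p$-modular system $(K,\mathcal{O},F)$ with $F$ a splitting field for all relevant groups, let $S_i$ be the simple $FG$-module affording $\varphi_i$, and let $P_i$ denote its projective cover, so that $\Phi_i$ is the ordinary character obtained by lifting $P_i$ to characteristic zero. Because $H$ is a $p'$-group, $FH$ is semisimple and $\ibr H = \irr H$; in particular every $FH$-module is projective. Let $T$ be the simple $FH$-module affording $\mu$. First I would record that $T$ lifts to an $\mathcal{O}H$-lattice with ordinary character $\mu$, and that induction commutes with reduction modulo $p$; since induction carries projective modules to projective modules, $\mathrm{Ind}_H^G(T)$ is a projective $FG$-module whose ordinary character is the induced character $\mu^G$. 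Hence $\mu^G$ is a genuine projective character, and writing $\mathrm{Ind}_H^G(T)\cong\bigoplus_j P_j^{\oplus m_j}$ we get $\mu^G=\sum_j m_j\Phi_j$ with each $m_j$ a nonnegative integer.

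The heart of the argument is to show $m_i\ge 1$. Since $P_i\twoheadrightarrow S_i$ is a projective cover and $F$ is a splitting field, the multiplicity $m_i$ of $P_i$ as a direct summand of the projective module $\mathrm{Ind}_H^G(T)$ equals $\dim_F\mathrm{Hom}_{FG}(\mathrm{Ind}_H^G(T),S_i)$. By Frobenius (Nakayama) reciprocity this dimension equals $\dim_F\mathrm{Hom}_{FH}(T,(S_i)_H)$, and since $FH$ is semisimple this is exactly the multiplicity of the simple module $T$ in $(S_i)_H$, i.e.\ the multiplicity of $\mu$ as a constituent of $(\varphi_i)_H$. By hypothesis $\mu$ occurs in $(\varphi_i)_H$, so $m_i\ge 1$ and $\Phi_i$ is a constituent of $\mu^G$.

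The remaining degree estimates are then immediate: because $\Phi_i$ occurs with positive coefficient in the character $\mu^G$, evaluating at the identity gives $\Phi_i(1)\le\mu^G(1)=|G:H|\,\mu(1)$, the last equality being the standard degree formula for induced characters. If $H$ is abelian then every $\mu\in\irr H$ is linear, so $\mu(1)=1$ and $\Phi_i(1)\le|G:H|$, as claimed.

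I expect the only delicate point to be the bookkeeping in the first paragraph, namely verifying cleanly that the ordinary character of the projective $FG$-module $\mathrm{Ind}_H^G(T)$ really is $\mu^G$, which rests on the compatibility of induction with the lifting and reduction maps for the $p'$-group $H$. Everything after that is Frobenius reciprocity together with the semisimplicity of $FH$, and the inequalities follow formally. An alternative, purely character-theoretic route would be to invoke a reciprocity for projective characters from \cite{navbook} directly identifying $\langle(\varphi_i)_H,\mu\rangle$ with the multiplicity of $\Phi_i$ in $\mu^G$; I would fall back on this if one prefers to avoid the module-theoretic lifting step.
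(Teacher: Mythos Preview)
Your proof is correct and arrives at the same key identity as the paper, namely that the multiplicity of $\Phi_i$ in $\mu^G$ equals the multiplicity of $\mu$ in $(\varphi_i)_H$, but you reach it by a genuinely different route. The paper argues purely character-theoretically: since $H$ is a $p'$-group, the induced character $\mu^G$ vanishes on $p$-singular elements, so by Corollary~2.17 of \cite{navbook} one may write $\mu^G=\sum_j a_j\Phi_j$ with integer $a_j$; the coefficients are then identified with $[\mu,(\varphi_j)_H]$ by a short inner-product computation using the lifts $\tilde{\varphi_j}$ and ordinary Frobenius reciprocity. Your argument instead works in the module category over a $p$-modular system: you induce the projective $FH$-module $T$ to obtain a projective $FG$-module, lift it to read off the character $\mu^G$, and then use the Nakayama adjunction $\mathrm{Hom}_{FG}(\mathrm{Ind}_H^G T,S_i)\cong\mathrm{Hom}_{FH}(T,(S_i)_H)$ together with semisimplicity of $FH$ to compute the multiplicity directly. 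Your approach is more conceptual and explains structurally \emph{why} $\mu^G$ is a nonnegative combination of the $\Phi_j$ (induction preserves projectivity), while the paper's approach stays entirely within the character-theoretic toolkit of \cite{navbook} and avoids introducing projective covers and modular systems. The alternative you sketch at the end is essentially what the paper does.
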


\begin{proof}
Since $H$ is a $p'$-subgroup of $G$, $\mu^G$ vanishes off $G^o$,  so $\mu^G = \sum_{j=1}^n a_j \Phi_j$, where $a_j$ an integer by Corollary 2.17 in \cite{navbook}.  We now claim that $a_j \ge 0$ for all $j$ and $a_i > 0$.

For each $j$, we see that $(\phi_j)_H$ is an ordinary character of $H$ and so $[\mu,(\phi_j)_H]$ is a nonnegative integer and we have
$$
|G|_p a_j = |G|_p [\mu^G, \phi_j]^\circ = [\mu^G, \tilde{\phi_j}] = [\mu, (\tilde{\phi_j})_H] = |G|_p [\mu,(\phi_j)_H].
$$
Hence $a_j = [\mu, (\phi_j)_H]$ is a nonnegative integer and if $\mu$ is a constituent of $(\varphi_j)_H,$ then $a_j = [\mu, (\varphi_j)_H] >1$.  Thus $\Phi_i$ is a constituent of $\mu^G$, and so, $\Phi_i (1) \leq |G:H|\mu(1)$.  In particular, if $H$ is an abelian $p'$-subgroup of $G$, then $\Phi_i (1) \le |G:H|$.
\end{proof}

It follows that the entries of $u$ are bounded above by $|G:H|$ where $H$ is an abelian $p'$-subgroup of $G$.  Applying the lower bound from Theorem \ref{bounds}, we have $s (\mathbb{Q}[\ibr G], \ibr G) \ge |G|/|G:H| = |H|$ where $H$ is an abelian $p'$-subgroup of $G$ of maximal order.  In summary, we have ${\rm max} (n, |H|) \le s (\mathbb{Q}[\ibr G], \ibr G) \le |G|_{p'}$ where $n$ is the number of $p$-regular classes of $G$ and $H$ is an abelian $p'$-subgroup of $G$ of maximal order.

On page 43 of \cite{navbook}, Navarro mentions that for an arbitrary group $G$, it is not necessarily true that the Galois Group of $\mathbb{Q}[\ibr G]$ over $\mathbb{Q}$ acts on $\ibr G$.  Thus, Theorem \ref{columns} does not apply.  In fact, it is not the case that the elements of every column of the Brauer character table add up to rational numbers.  For example, using GAP \cite {gap}, one can see that $G = {\rm PSL} (2,16)$ and $p = 2$, $G = {\rm PSL} (2,27)$ and $p = 3$, and $G = {}^2B_2 (32)$ and $p = 2$ all have columns in their Brauer character tables whose sums are not rational numbers.  We will see later when $G$ is $p$-solvable the columns will have integer sums.

\subsection{Projective Indecomposables}

In Example 1.9 of \cite{chilgen}, Chillag takes $A = \mathbb{Q}[{\rm PI} (G)]$ where ${\rm PI} (G) = \{ \Phi_i \}$ is the set of projective indecomposables of $G$.  Let $X ({\rm PI} (G))$ be the ``projective indecomposable table.''  Using Chillag's work, we see that $s ({\rm PI} (G))$ is an integer.  Using Problem 2.9 of \cite{navbook}, we have that $\rho = \sum_{i=1}^n \phi_i (1) \Phi_i$ where as before $\{ \phi_1, \dots, \phi_n \}$ are the irreducible Brauer characters of $G$.  Obviously, $1$ is the minimum of the $\phi_i (1)$, and the $\phi_i (1)$ will be at most the maximum degree of a character in $\irr G$ which we saw before was less than or equal to $|G:A|$ where $A$ is an abelian subgroup of maximal order.  Thus, as in the ordinary character case, we obtain $|A| \le s ({\rm PI} (G)) \le |G|$.  Note that since the Galois group does not act on the irreducible Brauer characters, it also does not act on the projective indecomposables.  Thus, it is not necessarily the case that the column sums in this table are rational numbers, and for example, in the table for $G = {\rm PSL}_2 (16)$ when $p = 2$, there are columns where the sums are not rational.  (This was computed in GAP \cite{gap}.)

\subsection{Partial Characters}

We now show that Chillag's work can be applied to Isaacs' $\pi$-partial characters and the associated $\pi$-projective indecomposable characters.  Isaacs initially defined the $\pi$-partial characters in \cite{pisep} and a different approach to defining the $\pi$-partial characters can be found in \cite{pipart}.  Let $\pi$ be a set of primes, and let $G$ be a $\pi$-separable group.  Let $G^*$ be the set of $\pi$-elements of $G$.  If $\Theta$ is a class function of $G$, then we write $\Theta^*$ to be the restriction of $\Theta$ to $G^*$.  We say that $\phi$ is a {\it $\pi$-partial character} of $G$ if $\phi = \Theta^*$ for some character $\Theta$ of $G$.  We define $\phi$ to be irreducible if it cannot be written as the sum of two other $\pi$-partial characters of $G$.  We write $\Ipi G$ for the set of irreducible $\pi$-partial characters of $G$. In both \cite{pisep} and \cite{pipart}, Isaacs proves that $\Ipi G$ forms a basis for the vector space of complex valued $\pi$-class functions of $G$, and that the $\pi$-partial characters of $G$ are precisely the integer sums of characters in $\Ipi G$ where the coefficients are all nonnegative and at least one is positive.

We also note it is shown in \cite{pisep} that if $G$ is a $p$-solvable group, then $p'$-partial characters of $G$ are precisely the Brauer characters of $G$ and that ${\rm I}_{p'} (G) = \ibr G$.  Hence, any result that we have regarding the $\pi$-partial characters applies to the Brauer characters of $p$-solvable groups.

Following Chillag, we take $A = \mathbb{Q}[\Ipi G]$ and we claim that it is not difficult to see that $\Ipi G$ is a right positive basis for $A$ and that $X (\Ipi G)$ is the $\pi$-partial character table for $G$.  Using Chillag's result, we have that $s (\Ipi G)$ is an integer.  Similar to the Brauer case, we can define the decomposition matrix by $\chi^* = \sum_{\phi \in \Ipi G} d_{\chi \phi} \phi$ where the $d_{\chi \phi}$ are nonnegative integers (see page 92 of \cite{fong}).  Suppose $\phi, \delta \in \Ipi G$.  Then there exist $\chi, \psi \in \irr G$ so that $\chi^* = \phi$ and $\psi^* = \delta$.  We see that $\chi \psi = \sum_{\gamma \in \irr G} a_\gamma \gamma$ for nonnegative integers $a_\gamma$.  We deduce that
$$
\phi \delta = \chi^* \psi^* = \sum a_\gamma \gamma^* = \sum_\gamma a_\gamma \sum_\tau d_{\gamma \tau} \tau,
$$
where $\gamma$ runs over $\irr G$ and $\tau$ runs over $\Ipi G$.  Thus, the multiplicity of $\tau$ in $\phi \delta$ is $\sum_\gamma a_\gamma d_{\gamma \tau}$.  In particular, if $\phi_1, \dots, \phi_n$ are an enumeration of the characters, then $M(\phi_i, \Ipi G)$ has nonnegative integer values.

Following Chapter 10 of \cite{fong}, we define $\Phi_\phi = \sum_{\chi \in \irr G} d_{\chi \phi} \chi$.  Using a proof similar to the proof in the Brauer case, we have that $\rho^* = \sum_{\phi} \Phi_{\phi} (1) \phi$.  On page 105 of \cite{fong}, it is mentioned that $\Phi_\phi$ will be induced from an irreducible character of a Hall $\pi$-subgroup of $G$.  This implies that $|G|_{\pi'} \le \Phi_{\phi} (1)$ for all $\phi \in \Ipi G$.  We also mention that the proof of Lemma \ref{lem:upper bound} applies to the $\pi$-projective indecomposables if $H$ is a $\pi$-subgroup, and so, we have $\Phi_\phi (1) \le |G:H|$ where $H$ is an abelian $\pi$-subgroup of maximal order.  Thus, ${\rm max} (n, |H|) \le s (\Ipi G) \le |G|_{\pi}$ where $n$ is the number of $\pi$-conjugacy classes of $G$ and $H$ is an abelian $\pi$-subgroup of $G$ of maximal order.

Notice also that the $\pi$-projective indecomposable characters will satisfy Chillag's conditions, so if we construct the table of the values of these characters, the sum will be an integer, and as in the case of the projective indecomposables for a single prime $p$, we see that this sum will be between $|A|$ and $|G|$ where $A$ is an abelian subgroup of maximal order.

In Corollary 10.2 of \cite{pisep}, it is proved that there exists a canonical subset $\Bpi G$ of $\irr G$ so that the map $\chi \mapsto \chi^*$ is a bijection from $\Bpi G$ to $\Ipi G$.  Using the definition of $\Bpi G$ one can show that if $\sigma$ is a Galois automorphism for $\mathbb{Q}_{|G|}$ over $\mathbb{Q}$ and $\chi \in \Bpi G$, then $\chi^\sigma \in \Bpi G$.  Using this fact, one can show that if $\phi \in \Ipi G$, then $\phi^\sigma \in \Ipi G$.  In particular, hypotheses of Theorem \ref{columns} are met.  Thus, the sums of the columns of the tables for the $\pi$-partial characters and the $\pi$-projective indecomposable characters lie in $\mathbb{Q}$.  Again since the entries are known to be algebraic integers, we conclude the sums of the columns in these tables are integers.  Note that this recovers the fact that the columns of the Brauer table for $p$-solvable groups are integers that was proved by Chen and Du in \cite{chendu}.


\section*{Acknowledgments}
The first author would like to thank the support of China Scholarship Council,
Department of Mathematical Sciences of Kent State University for its hospitality,
Funds of Henan University of Technology (2014JCYJ14, 2016JJSB074, 26510009),
Project of Education Department of Henan Province (17A110004), 
Projects of Zhengzhou Municipal Bureau of Science and Technology (20150249, 20140970),
and the NSFC (11571129).


\end{document}